\documentclass[11pt,reqno]{amsart}
\usepackage[foot]{amsaddr}

\numberwithin{equation}{section}
\allowdisplaybreaks

\usepackage{setspace}

\usepackage{parskip}
\usepackage{fullpage}
\usepackage{amssymb,amsfonts,amsmath,amsthm}
\usepackage{verbatim}

\usepackage{hyperref}
\usepackage[dvipsnames]{xcolor}

\newcommand\myshade{85}
\colorlet{mylinkcolor}{violet}
\colorlet{mycitecolor}{YellowOrange}
\colorlet{myurlcolor}{Aquamarine}
\hypersetup{
	linkcolor  = mylinkcolor!\myshade!black,
	citecolor  = mycitecolor!\myshade!black,
	urlcolor   = myurlcolor!\myshade!black,
	colorlinks = true,
}

\usepackage[shortlabels]{enumitem}
\newlist{sumside}{itemize}{2}
\setlist[sumside]{topsep=0pt,label=-,leftmargin=2em,noitemsep}

\usepackage{etoolbox}
\patchcmd{\section}{\scshape}{\bfseries}{}{}
\makeatletter
\renewcommand{\@secnumfont}{\bfseries}
\makeatother

\makeatletter
\patchcmd{\@settitle}{\uppercasenonmath\@title}{\Large}{}{}
\patchcmd{\@setauthors}{\MakeUppercase}{\normalsize}{}{}
\makeatother

\makeatletter
\def\th@definition{%
	\thm@headfont{\bfseries}
	\normalfont 
}
\makeatother

\usepackage[compress]{cite}

\usepackage{newtxtext}
\usepackage[libertine]{newtxmath}

\usepackage{stmaryrd}

\theoremstyle{definition}

\numberwithin{define}{section}
\newtheorem{thm}{Theorem}
\numberwithin{thm}{section}

\newtheorem{cor}[thm]{Corollary}
\newtheorem{rmk}[thm]{Remark}

\newtheorem*{exam-non}{Example}

\makeatletter
\def\th@definition{
	\thm@headfont{\bfseries}
	\normalfont 
}
\makeatother

\newcommand{\ignore}[1]{}

\newcommand{\nMod}[2]{
	\not\equiv #1\,\,(\text{mod}\,\,#2)
}
\newcommand{\Mod}[2]{
	\,\equiv\, #1\,\,(\text{mod}\,\,#2)
}

\newcommand{\forbid}[4]{[#1,#2; \,\equiv{#3}\,(#4)]}

\newcommand{\cC}{\mathcal{C}}

\newcounter{idfamily}
\newcommand{\newfamily}{
	\refstepcounter{idfamily}
	\refstepcounter{subsection}
	\subsection*{$\blacktriangleright$ Family \theidfamily.}\,
}

\newcounter{familyexample}[idfamily]
\newcommand{\newfamilyexample}[1]{
	\refstepcounter{familyexample}
	\refstepcounter{subsubsection}
	\subsubsection*{$\triangleright$ 
		#1 \theidfamily.\thefamilyexample}\quad
}

\newcommand{\desc}[1]{{\textit{#1:}}}

\usepackage{xstring,refcount}
\newcommand{\refid}[1]{\StrBehind{\getrefnumber{#1}}{.}}

\begin{document}

\title[New identities of Rogers-Ramanujan-MacMahon type]{A variant of {\bf \tt IdentityFinder} and some new identities of Rogers-Ramanujan-MacMahon type}

\author[Kanade]{Shashank Kanade\textsuperscript{1}}
\address{\textsuperscript{1}University of Denver, Denver, USA}
\email{shashank.kanade@du.edu}
\thanks{S.K. is presently supported by a start-up grant provided by University of Denver}

\author[Nandi]{Debajyoti Nandi}
\email{debajyoti.nandi@gmail.com}

\author[Russell]{Matthew C.\ Russell\textsuperscript{2}}
\address{\textsuperscript{2}Rutgers - The State University of New Jersey, Piscataway, USA}
\email{russell2@math.rutgers.edu}

\dedicatory{To George E.\ Andrews, with great respect and gratitude}
\begin{abstract}
	We report on findings of a variant of \texttt{IdentityFinder} -- a
	Maple program that was used by two of the authors to conjecture several new 
	identities of Rogers-Ramanujan kind. 
	In the present search, we modify the parametrization of the search space
	by taking into consideration several aspects of Lepowsky and Wilson's $Z$-algebraic
	mechanism and its variant by Meurman and Primc. We search for identities
	based on forbidding the appearance of ``flat'' partitions as sub-partitions.
	Several new identities of Rogers-Ramanujan-MacMahon type are found and proved.
\end{abstract}



\maketitle

\section{Introduction and motivation}

The aim of this paper is to report on several new 
integer partition identities of Rogers-Ramanujan-MacMahon type.
This paper should be viewed largely as a continuation of \cite{KR-idfind} with a 
search space that is broader and differently parametrized than the one in \cite{KR-idfind}.
For the general background on experimentally
finding new partition identities and the importance thereof we
refer the reader to a small but by no means exhaustive selection of articles:
\cite{And-Computers,And-qSeriesBook, MSZ, KR-idfind, MR-thesis}.
We ask the reader to recall the relevant terminology (sum-sides, product-sides,
difference conditions, initial conditions etc.) from \cite{KR-idfind}.

Several important considerations motivated the present search for identities.

Firstly, in \cite{Deb-thesis}, the second author was able to deduce conjectures of Rogers-Ramanujan-type
by analyzing the standard modules for the affine Lie algebra $A_2^{(2)}$ at level 4.
These were found by using Meurman and Primc's variant \cite{MP-ideals}  of Lepowsky and Wilson's $Z$-algebraic
mechanism \cite{LW-Z1,LW-Z2} for finding and proving new identities using standard modules for affine Lie algebras.
The remarkably striking feature of these identities is the appearance of an infinite list of conditions on the sum-sides. 
It remains a worthy (and perhaps a difficult) goal to author an automated search
vastly generalizing \texttt{IdentityFinder} targeted towards such kinds of identities and
understanding how the $Z$-algebraic mechanism (or the variant in \cite{MP-ideals}) works is therefore an
important first step.

In Lepowsky-Wilson's $Z$-algebraic interpretation \cite{LW-Z1,LW-Z2} of the sum-sides in Rogers-Ramanujan
identities, sub-partitions are eliminated (equivalently, forbidden to appear)
based on their lexicographical ordering (see also \cite{MP-ideals}).  
In other words, they treat the ``difference-2-at-distance-1'' condition in the Rogers-Ramanujan 
identities as forbidding the appearance of ``flattest'' 
length $2$-partitions as sub-partitions. The flattest length 2 partition of
$2n$ is $n+n$, while that of $2n+1$ is $(n+1) + n$.
A large subset of conditions on the identities in \cite{Deb-thesis} could be
interpreted this way. 

We are specifically motivated by the following ``affine rank 2''-type, that is, similar to the affine Lie algebras $A_1^{(1)}$ (also known as $\widehat{\mathfrak{sl}_2}$) and $A_2^{(2)}$ situations.
Roughly speaking, a relation with leading term being a 
``square'' of a certain vertex operator 
(as in $\widehat{\mathfrak{sl}_2}$ level 3) corresponds to 
the flattest $2$-partitions being forbidden (for example, in the case of
Rogers-Ramanujan identities).  If one has several such ``generic'' relations with leading term being
a quadratic, then one can eliminate as many first flattest
$2$-partitions.  Sometimes, one has ``non-generic'' quadratic
relations, meaning that the relevant matrices formed by the leading terms are sometimes singular,
which leads to conditional elimination of flattest partitions (for
example, in the case of Capparelli's identities). 
At higher levels, higher powers (as opposed to quadratics) of vertex operators are present as the leading terms in the relations, thereby
engendering the ``difference-at-a-distance''--type conditions. 
Our aim is to capture such phenomena.  Genuinely higher rank situations
give rise to various families of $Z$-operators and therefore to
multi-color partition identities. Note that many seemingly higher rank
algebras at low levels also yield ``affine rank 2''-type conditions.
Also observe that this is  a very crude guideline to the inner mechanics of $Z$-algebras, 
particular situations often involve many quirks.

Secondly, it was a question of Drew Sills if \texttt{IdentityFinder} (possibly with some modifications) is able
to capture identities like the following identity of MacMahon:
\begin{quote}
	Partitions of $n$ with no appearance of consecutive integers as parts and all parts at least $2$
	are equinumerous with partitions of $n$ in which each part is divisible by either $2$ or $3$.
\end{quote}
and a generalization due to Andrews:
\begin{quote}
	Partitions of $n$ with no appearance of consecutive integers as parts,
	no part being repeated thrice and all parts are greater than $1$
	are equinumerous with partitions of $n$ in which each part is $\Mod{2,3,4}{6}$.
\end{quote}
Sum-sides in both of these identities could be easily recast into the ``forbidding-flattest-partitions'' language.
For instance, in the case of MacMahon's identity,
forbidding the appearance of consecutive integers as parts
is equivalent to forbidding the appearance, as  sub-partitions, of flattest 2-partitions of
any odd integer. 
For other examples of such reinterpretations of various known identities, see Section \ref{sec:Known}.
Sequence avoiding partitions as the ones appearing in MacMahon's identity are also of an independent interest,
see for instance, \cite{And-MacMahonBijection,And-sequenceavoiding,HLR-sequenceavoiding,BMN-sequenceavoiding} etc.

It should now be clear to the reader that a framework built on forbidding flattest partitions can unlock a treasure of many new such identities
(and it indeed does, as we report in this paper).

One further observation proved quite useful in searching for these identities. 
In many well-known identities,
the initial conditions are  implied by the difference conditions if one  appends one or more fictitious $0$ parts
to the partitions (for example, MacMahon's identity and its generalization due to Andrews recalled above, 
the second of the Rogers-Ramanujan identities and so on). 
This phenomenon is quite well-known (as was pointed out to us by Drew Sills); see the description
of identities in \cite{And-SecondSchur} for instance.

We present several new families of identities.
Many of the identities reported here have the ``sequence avoiding'' feature as in the MacMahon identity above, and many identities are direct generalizations of MacMahon's identity.
Hence, we loosely chose to call these identities as identities of Rogers-Ramanujan-MacMahon type.
Quite contrary to our expectations, to the best of our knowledge, none of the identities presented here
are principally specialized characters of standard modules for affine Lie algebras at positive integral levels.
Some such identities may lie much deeper in the search space, or perhaps even more innovative searching parameters
are required.  
As a testimony to the former, several ideas of this article along with \cite{K-GG}
helped us identify conjectures related
to certain level 2 modules for $A_9^{(2)}$ which we presented in our article \cite{KR-a92}.
As will be clear from our discussion below, these identities lie quite deep in our current search space and hence had to be found by completely different methods.
Many of the conjectures reported in \cite{KR-a92} now stand proved thanks to the efforts of Bringmann, Jennings-Schaffer and Mahlburg \cite{BJM-a92}.
Lastly, we mention that the present search has a rather broad search space, hence many times an ad-hoc zooming into  the search space was required.

A majority of the identities presented below are proved bijectively, using the works of Xiong and Keith \cite{XK}, Pak and Postnikov \cite{PakPost}, Stockhofe \cite{Sto-bij} and Sylvester \cite{Sylvester-part}. One family of identities is proved by using Appell's theorem.

\subsection*{Future work and work in progress}

Extending the proof technique in Family \refid{fam:originalfamily10}\, is work in progress.

In our search, we worked with a specific ordering on the partitions (explained below). It would be very interesting to search with different orderings.

Several of the identities reported in this article quickly generalize to ``multi-color'' identities. 
We are in the process of significantly generalizing our current search to include these multi-color generalizations.

It will be very interesting to search for identities solely based on the recurrences for sum-side conditions. One advantage is that such a search is fast. This is an ongoing project.

\subsection*{Acknowledgments}

First and foremost, we are extremely grateful to Drew Sills for his question,
his many suggestions and insights which he shared with us throughout this project and for 
his continued encouragement,
all of which were major motivations for the investigations reported in this paper.
We sincerely appreciate the guidance we have received from James Lepowsky, Robert L. Wilson and Doron
Zeilberger over the years. We are thankful to Karl Mahlburg for illuminating discussions regarding bijective proofs.  It is our highest privilege to acknowledge the interest shown by
George E. Andrews in our work.

\section{Some known identities}\label{sec:Known}

Let us first standardize the conventions used in this paper.
Let $\mu = m_1 + \dots + m_r$ and $\pi = p_1 + \cdots + p_s$ be partitions
of $n$. 
If $\pi$ is a partition of $n$, we say that the \emph{weight} of $\pi$ is $n$.
Partitions will always be in \emph{non-increasing} order (however, we 
shall present the identities in a manner independent of order).

By a \emph{$k$-partition} of $n$, we mean a partition of length $k$ of $n$.

We say that $\mu < \pi$ (or that $\mu$ is \emph{flatter} than $\pi$) if
either of the following holds:
\begin{itemize}
	\item $r > s$ (this will not really be needed; we will
	only  compare partitions of same length.)
	\item $r=s$ and $m_1 = p_1, m_2 = p_2, \dots, m_{i-1} = p_{i-1}$ but
	$m_i<p_i$ for some $i$ with $1\leq i\leq r$.
\end{itemize}

\medskip

\begin{exam-non}
	Here are the $4$-partitions of $10$ arranged from flattest to steep,
	i.e., from lexicographically smallest to largest:
	\begin{align*}
	(3, 3, 2, 2)&< (3, 3, 3, 1)< (4, 2, 2, 2)< (4, 3, 2, 1)< (4, 4, 1, 1)\\ &< (5, 2, 2, 1)< (5, 3, 1, 1)< (6, 2, 1, 1)< (7, 1, 1, 1).
	\end{align*}
\end{exam-non}

Following is a (highly non-exhaustive) list of difference conditions in some well-known partition
identities recast in terms of forbidden sub-partitions.
We ignore initial conditions, focusing only on the global difference conditions.

\begin{enumerate}[leftmargin=*]
	\item Rogers-Ramanujan: flattest $2$-partitions are forbidden.
	\item Gordon-Andrews (modulo $2k+1$): flattest $k$-partitions are
	forbidden.
	\item Andrews-Bressoud (modulo $2k$): flattest $k$-partitions are
	forbidden, flattest $k-1$-partition of $n'$ is forbidden if $n'$
	satisfies a specific parity condition $\mod{2}$.
	\item Capparelli: flattest $2$-partitions are forbidden and for all
	$n' \not\equiv 0 \pmod 3$, second flattest $2$-partition of $n'$ is
	forbidden.
	\item Schur: flattest $2$-partitions are forbidden, second flattest
	$2$-partitions of even numbers are forbidden, second flattest
	$2$-partitions of numbers divisible by $3$ are forbidden.  The last
	two conditions can be combined to give: second flattest $2$
	partitions of numbers $\not\equiv \pm 1 \,\,(\mathrm{mod}\,{6})$ are forbidden.
	\item G\"{o}llnitz-Gordon: flattest $2$-partitions are forbidden, second
	flattest $2$- partitions of numbers $\equiv 2\mod{4}$ are forbidden.
	\item MacMahon: 
	flattest $2$-partitions of odd numbers are forbidden.
	\item Andrews: Recall that this identity states that the number of partitions of $n$ into
	parts congruent to $2$, $3$, or $4$, modulo $6$ equals the number of
	partitions of $n$ into parts greater than $1$ where no two consecutive
	integers may appear as parts and a given part may be repeated, but
	not more than twice. Recast: flattest $2$-partitions of odd numbers
	are forbidden, flattest $3$-partitions of numbers divisible by $3$
	are forbidden.  
	\item Symmetric Mod-9s \cite[$I_1, I_2, I_3$]{KR-idfind}: flattest 2-partition of $n'$ if
	$n'\not\equiv 0 \pmod 3$ is forbidden, first two flattest
	$3$-partitions for all $n'$ are forbidden.
	
	\item Identities $1$, $2$, $3$ from \cite{KR-a92}: flattest $2$-partitions of odd numbers forbidden,
	flattest $2$-partitions of $n'$ with $n'\equiv 2 \pmod 4$ forbidden, second flattest $3$-partitions of 
	any $n'$ with $n'\equiv \pm 2 \pmod 6$ forbidden, 
	third and fourth flattest $3$-partition of $n'$ with $n'\equiv 3\pmod 6$ forbidden.
	As one can see, these identities lie deep in our current search space.
\end{enumerate}

\section{The method}
For every $n$, let $\cC(n)$ be a certain subset of partitions of $n$.
We prescribe $\cC$ by imposing flattest-partition
conditions on the partitions.
Let $\cC_j(n)$ be those partitions in $\cC(n)$ with largest
part at most $j$.

Let 
\begin{align*}P(q) &= 1+\sum_{m\geq 1}\left\vert \cC(m) \right\vert q^m, \quad\quad
P_j(q) = 1+\sum_{m\geq 1}\left\vert \cC_j(m)\right\vert q^m.
\end{align*}
be the corresponding 
generating functions. We calculate several coefficients of $P$
(say, up to order $q^{25}$) then employ Euler's algorithm \cite{And-qSeriesBook} to see 
if $P$ has a chance to factor as an interesting (periodic)
infinite product of the form $\prod_{m\geq 1}(1-q^m)^{a_m}$.
If so, we have a potential candidate for an identity.
We use Euler's algorithm as implemented in Garvan's $q$-series maple package
\cite{G-qSeriesPackage}.

To verify a given potential candidate to a high degree of certainty, we proceed as in \cite{KR-idfind}. We first find recursions satisfied by $P_j$. We utilize these recursions to calculate 
$P_N$ up to the order $q^N$ for a large value of $N$.
Finally, we check if $P_N$ also factorizes similarly. Note that 
$$P-P_N\in q^{N+1}\mathbb{N}[[q]].$$
Seldom, these recursions will lead to easy proofs,  for example, in the case of
Identity \refid{id:not3mod4}.

\section{Search space}\label{sec:Searchspace}

The natural search space here is a collection of conditions:
\begin{quote}
	Parameters: $N, A_i,B_i,C_i,D_i,Bool_i$ 
	
	For each $i=1,\dots,N$:
	
	$A_i$th flattest length $B_i$ partition of any $n'$ is forbidden to appear
	as a sub-partition if $n'\equiv C_i\mod{D_i}$.  The boolean bit
	$Bool_i$ toggles between $\equiv$ and $\not\equiv$, 
\end{quote}

Many well-known identities have the following property of initial
conditions:

\begin{quote}
	A partition $\pi$ satisfies the difference conditions and the
	initial conditions \newline if and only if \newline $\pi+0$, i.e., $\pi$
	adjoined with a ``fictitious $0$'' part satisfies the difference
	conditions.
\end{quote}

We utilize this criterion to impose natural initial conditions.
Sometimes, adding more than one fictitious zeros could lead to
interesting identities.

\begin{rmk}Many identities come in pairs or sets (like Rogers-Ramanujan), and in
	such cases, at least one identity in the set seems to satisfy this criterion.
	For the second Capparelli identity, the initial condition that $2$ does not appear 
	could be replaced by assuming a fictitious $-1$ as a part.
\end{rmk}

\begin{rmk}
	Six new conjectural identities were found in \cite{KR-idfind}. It can be 
	checked that the initial conditions in the identities $I_2$--$I_6$ in \cite{KR-idfind}
	are all given by one or more fictitious zeros. $I_1$ does not have an initial condition.
	
	In \cite{MR-thesis} three more identities, called $I_{4a}, I_{5a}, I_{6a}$ were found
	as companions to the corresponding identities in \cite{KR-idfind}. These identities involved
	initial conditions which at first sight seem very mysterious. However, again, it can be
	checked that the initial conditions in $I_{4a}, I_{5a}, I_{6a}$ can be substituted with
	fictitious zero(s). There is a tiny bit of adjustment needed for $I_{6a}$ which we leave to the reader.
	
	One may find more examples of this phenomenon in \cite{KR-a92}, for example, initial conditions in Identity $3$ could be replaced by two fictitious zeros.
\end{rmk}	

\section{Results}

We will express the identities in the following way:

\desc{Product} Condition ``P''

\desc{Sum} Condition ``S''

\desc{Conjugate} Condition ``C''

\desc{Flat form} Condition ``F''

This corresponds to the statement that  for any $n$, partitions 
partitions satisfying condition ``P'' are equinumerous with
partitions satisfying condition ``S'', and moreover,
the generating function for the former class of partitions
can be expressed as a periodic infinite product.

In condition ``C'', we will describe the conditions obtained when the sum-side partitions are replaced by their conjugates (transposing the Ferrer's diagram). 
We will omit the proof of equivalence of Condition ``S'' and Condition ``C''.

In condition ``F'', we will encode the difference conditions on the 
sum-sides in the ``forbidding flattest
partitions'' format using the following convention:

\begin{quote}
	$\forbid{A}{B}{C}{D}$ corresponds to forbidding the appearance, as a
	sub-partition, of the $A$th flattest length $B$ partition of any
	number that is $\Mod{C}{D}$. We may also use $\not\equiv$ as necessary.
\end{quote}

\bigskip

The first few families of identities are either direct generalizations of MacMahon's identity recalled in the Introduction or resemble it closely. We shall provide bijective proofs of these identities.

Generalizations of MacMahon's partition identity were provided by Andrews~\cite{And-MacMahonGen}, and later by Subbarao~\cite{Subbarao-bij}. Then, Andrews, Eriksson, Petrov, and Romik provided a bijective proof of MacMahon's partition identity~\cite{And-MacMahonBijection}. A different bijective proof of MacMahon's partition identity was provided by Fu and Sellers~\cite{FS-MacMahon}, who also extended this new bijection to cover the generalizations of Andrews and of Subbarao, along with a new extension of their own.

\newfamily 
This family is composed of three infinite sub-families. 
\label{fam:originalfamily10}
Fix $k\ge 1$. 

\medskip

\noindent\textbf{Family 1.1.}

\desc{Product}
Parts are either multiples of 3 or congruent to $\pm 2 \pmod{3k+3}$.

\desc{Sum}
\begin{sumside}
	\item Difference between adjacent parts is not $1$.
	\item If the difference between adjacent parts is in $\{2, 5,\dots , 3k -4\}$, then {the smaller of these parts must be $\nMod{2}{3}$}.
	\item If the difference between adjacent parts is in $\{4,7,\dots,3k-2\}$, then 
	{the smaller of these parts must be $\Mod{1}{3}$}.
	\item Initial conditions are given by a fictitious zero, i.e., no parts are equal to $1$, $4$, \dots , $3k- 2$.
\end{sumside}

\desc{Conjugate}
\begin{sumside}
	\item No part appears exactly once.
	\item  If the frequency of a part belongs to $\{2, 5, 8,\dots , 3k- 4\}$, then {the number of parts that are strictly greater than it must be $\nMod{2}{3}$}.
	\item If the frequency of a part belongs to $\{4, 7, 10,\dots, 3k - 2\}$, then the number of parts that are strictly greater than it must be $\Mod{1}{3}$.
\end{sumside}

\medskip

\noindent\textbf{Family 1.2.}

\desc{Product}
Parts are either multiples of 3 or congruent to {$\Mod{-4,-2}{3k+3}$}.

\desc{Sum}
\begin{sumside}
	\item Difference between adjacent parts is not $1$.
	\item If the difference between adjacent parts is in $\{2, 5,\dots , 3k -4\}$, then {the smaller of these parts must be $\nMod{0}{3}$}.
	\item If the difference between adjacent parts is in $\{4,7,\dots,3k-2\}$, then 
	{the smaller of these parts must be $\Mod{2}{3}$}.
	\item Initial conditions are given by a fictitious zero, i.e., no parts are equal to $1$, $4$, \dots , $3k- 2$ or $2, 5,\dots , 3k -4$.
\end{sumside}

\desc{Conjugate}
\begin{sumside}
	\item No part appears exactly once.
	\item  If the frequency of a part belongs to $\{2, 5, 8,\dots , 3k- 4\}$, then the number of parts that are strictly greater than it must be $\nMod{0}{3}$.
	\item If the frequency of a part belongs to $\{4, 7, 10,\dots, 3k - 2\}$, then the number of parts that are strictly greater than it  must be $\Mod{2}{3}$.
\end{sumside}

\medskip

\noindent\textbf{Family 1.3.}

\desc{Product}
Parts are either multiples of 3 or congruent to {$\Mod{2,4}{3k+3}$}.

\desc{Sum}
\begin{sumside}
	\item Difference between adjacent parts is not $1$.
	\item If the difference between adjacent parts is in $\{2, 5,\dots , 3k -4\}$, then {the smaller of these parts must be $\nMod{1}{3}$}.
	\item If the difference between adjacent parts is in $\{4,7,\dots,3k-2\}$, then 
	{the smaller of these parts must be $\Mod{0}{3}$}.
	\item Initial conditions are given by a fictitious zero, i.e., no part is equal to $1$.
\end{sumside}

\desc{Conjugate}
\begin{sumside}
	\item No part appears exactly once.
	\item  If the frequency of a part belongs to $\{2, 5, 8,\dots , 3k- 4\}$, then the number of parts that are strictly greater than it must be $\nMod{1}{3}$.
	\item If the frequency of a part belongs to $\{4, 7, 10,\dots, 3k - 2\}$, then the number of parts that are strictly greater than it  must be $\Mod{0}{3}$.
\end{sumside}

\medskip

We now recall the necessary tools required to prove this family of identities.

Glaisher's Theorem (due to J.\ W.\ L.\ Glaisher~\cite{Gla}), a generalization of Euler's Identity, states that, for fixed modulus $m\ge 2$ and all nonnegative integers $n$, the number of partitions of $n$ with no parts congruent to $0 \pmod m$ equals the number of partitions of $n$ with no part occurring $m$ or more times. A natural question to ask is whether or not there is a bijective proof of Glaisher's Theorem that ``acts'' similarly to Sylvester's bijection. As it turns out, a bijection originally due to D.\ Stockhofe~\cite{Sto-bij} does the trick. Accordingly, X.\ Xiong and W.\ Keith~\cite{XK} provided a refinement of Glaisher's Theorem using a small extension of Stockhofe's bijection. We will give this refinement immediately after defining a few new bits of terminology.

Let the length type of a partition with no parts congruent to $m$ be the $(m-1)$-tuple $\left(\alpha_1,\alpha_2,\dots,\alpha_{m-1}\right)$, where there are $\alpha_i$ parts congruent to $i \pmod m$. Let the alternating sum type of a partition in which no part occurs $m$ or more times be the $(m-1)$-tuple $\left(M_1-M_2,M_2-M_3,\dots,M_{m-1}-M_{m}\right)$, where 
$M_i$ is the sum of all parts in the partition whose index is congruent to $i \pmod m$.

\begin{thm}[\cite{XK}]
	Consider a modulus $m$ and a nonnegative integer $n$. The number of partitions of $n$ with no parts congruent to $0 \pmod m$ and with length type $\left(\alpha_1,\alpha_2,\dots,\alpha_{m-1}\right)$ equals the number of partitions of $n$ with no part occurring $m$ or more times with alternating sum type $\left(\alpha_1,\alpha_2,\dots,\alpha_{m-1}\right).$
\end{thm}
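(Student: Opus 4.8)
The plan is to exhibit a weight-preserving bijection between $m$-regular partitions of $n$ (no part divisible by $m$) and $m$-flat partitions of $n$ (no part with multiplicity $\ge m$) that carries the length type to the alternating sum type. As the surrounding discussion indicates, the appropriate map is Stockhofe's bijection~\cite{Sto-bij}, in the slightly extended form used in~\cite{XK}; the real content is checking that it transports the two refined statistics into one another.

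A convenient first step is to recast the alternating sum type more transparently. For an arbitrary partition $\mu = p_1 \ge p_2 \ge \cdots$ and $i \in \{1,\dots,m-1\}$, write $M_i - M_{i+1} = \sum_{k \ge 0}(p_{i+km} - p_{i+km+1})$; since $p_j - p_{j+1}$ is the number of columns of the Ferrers diagram of $\mu$ of height exactly $j$, equivalently the number of parts of the conjugate $\mu'$ equal to $j$, the $i$-th coordinate of the alternating sum type of $\mu$ is just the number of parts of $\mu'$ congruent to $i \pmod m$. As conjugation is an involution on partitions of $n$ taking $m$-flat partitions to those whose consecutive parts differ by at most $m-1$ and whose least part is at most $m-1$, the theorem is equivalent to the statement that such ``gap-bounded'' partitions of $n$ with a prescribed number of parts in each residue class $1,\dots,m-1$ modulo $m$ are equinumerous with $m$-regular partitions of $n$ of the corresponding length type. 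This reformulation also makes visible that neither Glaisher's classical bijection nor conjugation alone does the job --- a small check, e.g.\ $m=3$, $n=6$, shows Glaisher pairing $5+1$ (length type $(1,1)$) with a partition of alternating sum type $(4,1)$ --- so a genuinely finer bijection is required.

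The core of the proof is then the claim that Stockhofe's bijection (or the extension in~\cite{XK}) intertwines the two statistics, which I would prove by induction following its recursive construction. Each elementary move corresponds, on the $m$-regular side, to adjusting the multiplicity of a part in accordance with its base-$m$ expansion, and on the $m$-flat side to splitting off or absorbing a part of the form $v m^j$; one checks that each such move changes the number of parts of $\mu'$ in a given residue class by exactly the amount it changes the length type on the other side. The relevant local rule is explicit: a part $w$ of an $m$-flat partition occupying the consecutive positions $P_w+1,\dots,P_w+r_w$ (with $P_w$ the number of larger parts and $r_w<m$ its multiplicity) contributes $+w$ to $M_i - M_{i+1}$ exactly when $P_w + r_w \equiv i \pmod m$ and $-w$ exactly when $P_w \equiv i \pmod m$, so one must track how the quantities $P_w$ shift under each move.

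The main obstacle is precisely this bookkeeping: the alternating sum type depends on the absolute positions modulo $m$ of the parts in the sorted partition, so splitting or merging a single part can perturb the positions of many others, and one has to show that the resulting change matches the simple, purely local change of the length type on the $m$-regular side. I would also dispatch the boundary cases --- the empty partition, and partitions whose length is not divisible by $m$ --- separately, since position-based statistics are easiest to mishandle there. Granting the bijection and its compatibility with the statistics for each $n$, summing over $n$ yields the theorem.
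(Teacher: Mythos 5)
The paper itself does not prove this statement: it is quoted from Xiong and Keith \cite{XK}, the map in question being Stockhofe's bijection, and the text explicitly declines to give the details. So the only fair comparison is with the cited source, and your plan does follow the same route (Stockhofe's bijection, extended as in \cite{XK}, shown to carry length type to alternating sum type). Your preliminary reductions are correct and worthwhile: the identity $M_i-M_{i+1}=\sum_{k\ge 0}(p_{i+km}-p_{i+km+1})$ does show that the $i$-th coordinate of the alternating sum type of $\mu$ counts the parts of $\mu'$ congruent to $i\pmod m$, conjugation does identify $m$-flat partitions with partitions whose consecutive differences and smallest part are at most $m-1$, and your $m=3$, $n=6$ check correctly shows that Glaisher's bijection does not preserve the refined statistic, so something finer is genuinely needed.

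The gap is that the decisive step is never carried out. The assertion that Stockhofe's bijection (or its extension) intertwines the two statistics \emph{is} the theorem; in your write-up it appears only as a claim ``which I would prove by induction following its recursive construction,'' followed by an acknowledgment that the positional bookkeeping modulo $m$ is ``the main obstacle.'' Identifying the obstacle is not the same as overcoming it, so as it stands the proposal is a plan rather than a proof. Moreover, your description of the elementary moves --- adjusting multiplicities according to base-$m$ expansions and splitting off or absorbing parts of the form $vm^j$ --- describes Glaisher's map, the very map you have just shown fails to preserve the statistic; Stockhofe's construction instead proceeds by decomposing the partition according to residues of its parts and transporting certain blocks between the pieces, and any induction must be anchored to that actual recursive structure. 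To complete the argument you would need to state the bijection's steps precisely and verify, for each step, the exact change in the residue-class count of the conjugate's parts against the change in length type, including the boundary cases you mention; alternatively, for the purpose of this paper, it suffices to cite \cite{XK}, as the authors do.
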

We will not give the details of the bijection here, but direct the reader to the works of Xiong and Keith~\cite{XK} and Stockhofe~\cite{Sto-bij} for more information. 
The reader is invited to verify for herself that, in the case $m=2$, this reduces down to the properties of Sylvester's bijection to be used for the identities below. For our purposes, we will use the case $m=3$ to provide a proof of Family \refid{fam:originalfamily10}\,\, which gives 
a new generalization of MacMahon's identity.

\begin{proof}[Proof of Family \refid{fam:originalfamily10}]
	We shall only prove Family 1.1, the other two families being similar.
	
	Consider a partition of $n$ counted in product side. For the parts that are congruent to $0 \pmod 3$, replace all parts $3j$ with three copies of the part $j$. Set these parts aside for the time being.
	
	Now, consider the parts that are congruent to $\pm 2 \pmod{3k+3}$. Let the number of parts congruent to $2 \pmod{3k+3}$ be $\alpha_1$, and the number of parts congruent to $-2 \pmod{3k+3}$ be $\alpha_2$.
	These parts can be written as either $\left(3k+3\right)m_j-(3k-1)$ or $\left(3k+3\right)m_j-2$, respectively. Map these parts to $3m_j-2$ and $3m_j-1$, respectively.
	This now provides a partition in which no part is a multiple of 3. At this stage, use Stockhofe's bijection to obtain a partition $\mu_1+\mu_2+\mu_3+\cdots$ in which each part appears at most twice which has length type $\left(\alpha_1,\alpha_2\right)$.
	
	Let
	\begin{align*}
	M_1 &= \mu_1 + \mu_4 + \mu_7 + \cdots, \\
	M_2 &= \mu_2 + \mu_5 + \mu_8 + \cdots, \\
	M_3 &= \mu_3 + \mu_6 + \mu_9 + \cdots,
	\end{align*}
	so the alternating sum type is $\left(M_1-M_2,M_2-M_3\right).$
	
	Now:
	\begin{itemize}
		\item Replace each part  $\mu_1,\mu_4,\mu_7,\dots$ with two copies of that part.
		\item Replace each part $\mu_2,\mu_5,\mu_8,\dots$ with $(3k-1)$  copies of that part.
		\item Replace each part $\mu_3,\mu_6,\mu_9,\dots$ with two copies of that part.
	\end{itemize}
	We need to verify that all of these operations restore the partition to its original weight. We just added back in a sum of $M_1 + (3k-2)M_2+M_3$. But, we know
	\begin{align*}
	M_1 - M_2 &= \alpha_1 \\
	M_2 - M_3 &= \alpha_2 \\
	M_1 + M_2 + M_3 &= 3\sum m_j -2\alpha_1 - \alpha_2
	\end{align*}
	Now,
	\begin{align*}
	&M_1 + (3k-2)M_2+M_3 \\ &= 
	k\left(M_1+M_2+M_3\right)
	-\left(k-1\right)\left(M_1-M_2\right)+\left(k-1\right)\left(M_2-M_3\right) \\
	&=k\left(3\sum m_j -2\alpha_1 - \alpha_2\right)-\left(k-1\right)\alpha_1+\left(k-1\right)\alpha_2 \\
	&=\sum 3km_j -\left(3k-1\right)\alpha_1-\alpha_2,
	\end{align*}
	so we are adding $\sum 3km_j -\left(3k-1\right)\alpha_1-\alpha_2$ back into our partition.

	Restoring the ``set aside'' parts that come in triples from the very start of the proof, we have obtained a partition that satisfies the conditions as in the Conjugate formulation.
	
	Now we produce a candidate for the inverse map.
	Consider a partition $\pi$  satisfying the conditions of the conjugate formulation.
	Break $\pi$ into five pieces: 
	\begin{enumerate}
		\item In $\pi_1$ collect those parts of $\pi$ whose frequency is divisible by $3$.
		\item In $\pi_2$ collect those parts that have frequency $\Mod{2}{3}$ such that the number of strictly larger parts is $\nMod{2}{3}$.
		Note that parts of $\pi$ with frequency belonging to $\{2,5,\dots,3k-4	\}$ are exactly the parts accounted in $\pi_2$.
		\item In $\pi_3$ collect those parts that have frequency $\Mod{2}{3}$ such that the number of strictly larger parts is $\Mod{2}{3}$.
		Clearly, any part appearing in $\pi_3$ has frequency at least $3k-1$.
		\item In $\pi_4$ collect those parts that have frequency $\Mod{1}{3}$ such that the number of strictly larger parts is $\Mod{1}{3}$.
		Parts of $\pi$ with frequency belonging to $\{4,7,\dots,3k-2\}$ are exactly the parts accounted in $\pi_4$.
		\item In $\pi_5$ collect those parts that have frequency $\Mod{1}{3}$ such that the number of strictly larger parts is $\nMod{1}{3}$.
		Any part appearing in $\pi_5$ has frequency at least $3k+1$.
	\end{enumerate}

	Retain $2$ copies of each part appearing in $\pi_2$ and move the rest of the copies to $\pi_1$.
	Retain $3k-1$ copies of each part appearing in $\pi_3$ and move the rest of the copies to $\pi_1$.
	Retain $4$ copies of each part appearing in $\pi_3$ and move the rest of the copies to $\pi_1$.
	Retain $3k+1$ copies of each part appearing in $\pi_3$ and move the rest of the copies to $\pi_1$.
	Denote the new $\pi_1$ by $\pi_1'$. 
	After this, keep only $1$ copy of each part appearing in $\pi_2$ and $\pi_3$, discard rest of the copies, and call
	the new partitions $\pi_2'$ and $\pi_3'$. Similarly get $\pi_4'$ and $\pi_5'$ by retaining $2$ copies of each part in $\pi_4$ and
	$\pi_5$ respectively.
	
	Coalesce every tuple of $3$ copies of a part $j$ from $\pi_1'$ into a new part $3j$, and call the new partition $\pi_1''$ and keep this aside.
	
	Consider $\mu=\pi_2'+\pi_3'+\pi_4'+\pi_5'$ and map this via inverse of Stockhofe's bijection we used above to obtain a new partition $\mu'$
	in which no part is a multiple of $3$. In $\mu'$, send every part $3m_j-1$ to the part $\left(3k+3\right)m_j-(3k-1)$ 
	and every part $3m_j-2$ to $\left(3k+3\right)m_j-2$. Call this new partition $\mu''$.
	
	Finally, merge $\pi_1''$ and $\mu''$.

	We leave it to the reader to convince herself that this is indeed the inverse map.

\end{proof}

\newfamilyexample{Example} Letting $k=1$ in Family \refid{fam:originalfamily10}.1 recovers MacMahon's identity (the second and third conditions on the sum-side are vacuous).

Now we present identities obtained with $k=2$ which were the ones found by our computer program.

\newfamilyexample{Example} Take $k=2$ in Family \refid{fam:originalfamily10}.1.

\desc{Product} $\Mod{0,2,3,6,7}{9}$

\desc{Sum}
\begin{sumside}
	\item Difference between adjacent parts is not $1$.
	\item If the difference between adjacent parts is $2$ then their sum is $\nMod{0}{6}$.
	\item If the difference between adjacent parts is $4$ then their sum is $\nMod{2,4}{6}$.
	\item Initial conditions given by a fictitious zero, i.e., smallest part is not $1$ or $4$.
\end{sumside}

\desc{Conjugate}
\begin{sumside}
	\item Difference between adjacent parts is not $1$.
	\item If a part appears exactly twice then the number of parts bigger
	than it is $\nMod{2}{3}$.
	\item If a part appears exactly four times then the number of parts bigger
	than it is $\Mod{1}{3}$.
\end{sumside}

\desc{Flat form} Forbid $\forbid{1}{2}{1}{2}$, $\forbid{2}{2}{0}{6}$, $\forbid{3}{2}{2}{6}$, and $\forbid{3}{2}{4}{6}$.

\begin{proof}[Recursions]
	Even though we have provided a proof above,  we
	also provide the following recursions as they will lead to a nice pattern.
	\begin{align*}
	P_1&= 1, \quad P_2=\dfrac{1}{1-q^2},\quad P_3=P_3=\dfrac{1}{1-q^3} + \dfrac{1}{1-q^2} -1,\\
	P_{3k} &= P_{3k-1} + \dfrac{q^{3k}}{1-q^{3k}}
	\left(P_{3k-2} - P_{3k-4} + P_{3k-5}\right)\\
	P_{3k+1} &= P_{3k} + \dfrac{q^{3k+1}}{1-q^{3k+1}}
	\left(P_{3k-2} - P_{3k-3} + P_{3k-4}\right)\\
	P_{3k+2} &= P_{3k+1} + \dfrac{q^{3k+2}}{1-q^{3k+2}}
	P_{3k}. \qedhere
	\end{align*}
\end{proof}

\newfamilyexample{Example} Take $k=2$ in Family \refid{fam:originalfamily10}.2.

\desc{Product} $\Mod{0,3,5,6,7}{9}$

\desc{Sum}
\begin{sumside}
	\item Difference between adjacent parts is not $1$.
	\item If the difference between adjacent parts is $2$ then their sum is $\nMod{2}{6}$.
	\item If the difference between adjacent parts is $4$ then their sum is $\nMod{0,4}{6}$.
	\item Initial conditions given by a fictitious zero, i.e., smallest part is not $1$, $2$ or $4$.
\end{sumside}

\desc{Conjugate}
\begin{sumside}
	\item Difference between adjacent parts is not $1$.
	\item If a part appears exactly twice then the number of parts bigger
	than it is $\nMod{0}{3}$.
	\item If a part appears exactly four times then the number of parts bigger
	than it is $\Mod{2}{3}$.
\end{sumside}

\desc{Flat form} Forbid $\forbid{1}{2}{1}{2}$, $\forbid{2}{2}{2}{6}$, $\forbid{3}{2}{0}{6}$, and $\forbid{3}{2}{4}{6}$.
\begin{proof}[Recurrences]
	\begin{align*}
	P_1&=P_2= 1, \quad P_3=P_4=\dfrac{1}{1-q^3},\quad P_5=\dfrac{1}{1-q^5} + \dfrac{1}{1-q^3} -1,\\
	P_{3k} &= P_{3k-1} + \dfrac{q^{3k}}{1-q^{3k}}
	P_{3k-2}\\
	P_{3k+1} &= P_{3k} + \dfrac{q^{3k+1}}{1-q^{3k+1}}
	\left(P_{3k-1} - P_{3k-3} + P_{3k-4}\right)\\
	P_{3k+2} &= P_{3k+1} + \dfrac{q^{3k+2}}{1-q^{3k+2}}
	\left(P_{3k-1}-P_{3k-2}+P_{3k-3} \right). \qedhere
	\end{align*}
\end{proof}

\newfamilyexample{Example} Take $k=2$ in Family \refid{fam:originalfamily10}.3.

\desc{Product} $\Mod{0,2,3,4,6}{9}$

\desc{Sum}
\begin{sumside}
	\item Difference between adjacent parts is not $1$.
	\item If the difference between adjacent parts is $2$ then their sum is $\nMod{4}{6}$.
	\item If the difference between adjacent parts is $4$ then their sum is $\nMod{0,2}{6}$.
	\item Initial conditions given by a fictitious zero, i.e., smallest part is not $1$.
\end{sumside}

\desc{Conjugate}
\begin{sumside}
	\item Difference between adjacent parts is not $1$.
	\item If a part appears exactly twice then the number of parts bigger
	than it is $\nMod{1}{3}$.
	\item If a part appears exactly four times then the number of parts bigger
	than it is $\Mod{0}{3}$.
\end{sumside}

\desc{Flat form} Forbid $\forbid{1}{2}{1}{2}$, $\forbid{2}{2}{4}{6}$, $\forbid{3}{2}{0}{6}$, and $\forbid{3}{2}{2}{6}$.
\begin{proof}[Recurrences]
	We have the following recursions.
	\begin{align*}
	P_1&= 1, \,\, P_2=\dfrac{1}{1-q^2},\,\, 
	P_3=\dfrac{1-q^5}{(1-q^3)(1-q^2)},\,\,\\
	P_4&=P_3 + \dfrac{q^4}{(1-q^4)(1-q^2)},\,\,
	P_5=P_4+\dfrac{q^5}{1-q^5}P_3 \\
	P_{3k} &= P_{3k-1} + \dfrac{q^{3k}}{1-q^{3k}}
	\left(P_{3k-3}-P_{3k-4}+P_{3k-5} \right)\\
	P_{3k+1} &= P_{3k} + \dfrac{q^{3k+1}}{1-q^{3k+1}}
	P_{3k-1}\\
	P_{3k+2} &= P_{3k+1} + \dfrac{q^{3k+2}}{1-q^{3k+2}}
	\left(P_{3k}-P_{3k-2}+P_{3k-3} \right).
	\qedhere
	\end{align*}
\end{proof}	

\begin{rmk}
	Note how the recursions in the previous three identities
	are related by a cyclic shift.
\end{rmk}

\newfamily{}
\label{fam:similarfirst}

This is an infinite family, with one identity for every even modulus $\geq 4$.

Fix an even $k\geq 1$.

\desc{Product} Each part is either even or $\Mod{-1}{2k+2}$.

\desc{Sum}
\begin{sumside}
	\item An odd part $2j+1$ is not immediately adjacent to any of the $2j,2j-2,\dots,2j-2k+2$ (its previous $k$ even numbers).
	\item Initial conditions implied by adding a fictitious zero. 
	That is, smallest part is not equal to  $1,3,\dots,2k-1$.
\end{sumside}

\desc{Conjugate} If a part appears exactly $1$, $3$, $\dots$ or $2k-1$ times, then there are an odd number of parts strictly greater than it.

Euler's celebrated partition identity states that, for any nonnegative integer $n$, the number of partitions of $n$ into odd parts equals the number of partitions of $n$ into distinct parts. A key ingredient in our work is the bijective proof of this identity given by J.\ J.\ Sylvester in his classic, colorfully-named treatise on partitions~\cite{Sylvester-part}. This may not be the ``simplest'' proof --- or even the easiest bijective proof --- but it possesses some properties that will be important for us later. (See the work of D.\ Zeilberger~\cite{Zeil-bij} for a recursive formulation of the bijection; for more information on partition bijections, see I.\ Pak's lucid survey article~\cite{Pak-survey}.)

\begin{proof}
	This and the following few families will be proved using Pak and Postnikov's bijection \cite{PakPost}.
	
	Consider a partition of $n$ counted in the product side, i.e., a partition in which each part is either even or $\Mod{-1}{2k+2}$. First, we break all even parts in half: that is, for the parts that are congruent to $0 \pmod 2$, replace all parts $2j$ with two copies of the part $j$.
	
	The remaining parts are all of the form $(2k+2)m_j-1$ for some positive integers $m_j$. Replace each of these parts with $2m_j-1$; we are now considering a partition into odd parts. We send this to a partition into distinct parts \cite{PakPost}. For this new partition $\mu_1+\mu_2+\mu_3+\mu_4+\dots$
	we replace each odd-indexed part with $2k+1$ copies of that part.
	
	The proof that this procedure gets us a partition of correct weight and that 
	this map is a bijection between partitions counted in the product side and the ones counted in the Conjugate formulation is exactly as in the proof of Family \refid{fam:originalfamily10}\, given above.
	
	Now we produce a candidate for the inverse map. 
	Consider a partition $\pi$ of weight $n$ satisfying the conditions of the conjugate formulation. Break $\pi$ into three classes. Collect in $\pi_1$ those parts that appear with an even frequency, collect in $\pi_2$ those parts that appear with  an odd frequency and such that the number of parts that are strictly larger is also odd, and collect in $\pi_3$ those parts that appear with  an odd frequency and such that the number of parts that are strictly larger is even.
	Note that $\pi_2$ necessarily contains all those parts of $\pi$ that appear with an odd frequency $\leq 2k-1$, and any part appearing in $\pi_3$ has frequency at least $2k+1$.
	
	Now, retain one copy of each part appearing in $\pi_2$, and move the rest of the copies to $\pi_1$.
	Retain $2k+1$ copies of each  part appearing in $\pi_3$ and move the rest of the copies (of which there are an even number) to $\pi_1$. After this, only retain a single copy of each part appearing in $\pi_3$ and discard the rest of the copies.
	Call the new partitions $\pi_1'$, $\pi_2'$ and $\pi_3'$. 
	
	For $\pi_1'$, merge two copies of each part $j$ into a new part $2j$, call the new partition $\pi_1''$ and keep it aside.
	
	Consider $\mu=\pi_2'+\pi_3'$. It is not hard to see that in $\mu$ has distinct parts and parts of odd index are precisely the parts coming from $\pi_3'$. Now map $\mu$ to a partition with odd parts $\mu'$. In $\mu'$ map every odd part $2m_j-1$ to $(2k+2)m_j-1$ to obtain a new partition $\mu''$. Finally merge $\pi_1''$ and $\mu''$.
	
	We leave it to the reader to convince herself that this is indeed the inverse map.

\end{proof}

Let us consider a specific example of this. Consider the theorem in the case that $k=2$. The product side allows parts congruent to $0\pmod 2$ and $5 \pmod 6$; for example, consider
$$40+23+14+14+12+11+6+6+6+5+5.$$
First, we replace all of the even parts $2j$ with two copies of $j$, obtaining $$40+14+14+12+6+6+6 \mapsto 20+20+7+7+7+7+6+6+3+3+3+3+3+3.$$
Now consider the remaining odd parts, which are all congruent to $5 \pmod 6$:
$$23+11+5+5.$$
Sending each part of the form $6m_j-1$ to $2m_j-1$ produces 
$$7+3+1+1.$$
This maps to the following partition with distinct parts:
$$7+4+1.$$
We now replace each odd-indexed part with 5 copies of itself, producing
$$7+7+7+7+7+4+1+1+1+1+1.$$
Now, combining this with the previously obtained parts, we finally get
$$20+20+7+7+7+7+7+7+7+7+7+6+6+4+3+3+3+3+3+3+1+1+1+1+1.$$

For the inverse map, check that 
$\pi_1=20+20+6+6+3+3+3+3+3+3$, 
$\pi_2=4$,
$\pi_3=7+7+7+7+7+7+7+7+7+1+1+1+1+1$.

We get $\pi_1'=20+20+7+7+7+7+6+6+3+3+3+3+3+3$, $\pi_2'=4$, $\pi_3'=7+1$.
We have $\pi_2''=40+14+14+12+6+6+6$.
We also have $\mu=7+4+1$, $\mu'=7+3+1+1$ and $\mu''=23+11+5+5$.

Finally, we have $\pi''+\mu''=40+23+14+14+12+11+6+6+6+5+5$.

\begin{rmk}
	All of the families from here until Family \refid{fam:similarlast}\,\,will use a very similar procedure to obtain the bijections. We shall only indicate how the proofs differ, leaving the details to the reader.
\end{rmk}

\newfamilyexample{Example}

\desc{Product} $\Mod{0,2,3}{4}$

\desc{Sum}
\begin{sumside}
	\item An odd part $2j+1$ is not immediately adjacent to $2j$.
	\item Smallest part is not $1$.
\end{sumside}

\desc{Flat form} Forbid $\forbid{1}{2}{1}{4}$.

\begin{proof} This particular identity can be proved quickly using recursions.
	\begin{align*}
	P_{1}&=1,\\
	P_{2j+1}&=\dfrac{q^{2j+1}}{1-q^{2j+1}}P_{2j-1}+
	\dfrac{1}{1-q^{2j}}P_{2j-1}
	=\dfrac{1-q^{4j+1}}{(1-q^{2j})(1-q^{2j+1})}P_{2j-1}.
	\end{align*}
	Now take the limit as $j\rightarrow\infty$.
\end{proof}

\newfamilyexample{Example}

\desc{Product} $\Mod{0,2,4,5}{6}$.     

\desc{Sum}
\begin{sumside} 
	\item An odd part $2j+1$ is not immediately adjacent to
	either of $2j$ or $2j-2$. 
	\item Initial conditions implied by a fictitious zero, i.e., smallest part is not equal to $1$ or $3$.
\end{sumside}

\desc{Flat-form} Forbid $\forbid{1}{2}{1}{4}$, and $\forbid{2}{2}{3}{4}$.

\newfamily{} 
This is an infinite family with one identity for each even modulus $\geq 4$.

Fix a $k\geq 1$.

\desc{Product} Each part is either even or $\Mod{1}{2k+2}$

\desc{Sum}
An even part $2j$ is forbidden to be 
adjacent to either of $2j-1,2j-3,\dots,2j-2k+1$ (its previous $k$ odd numbers).

\desc{Conjugate} 
If a part appears exactly $1$, $3$, $\dots$ or $2k-1$ times, then there are an even number of parts  strictly greater than it.

\begin{proof}
	Consider a partition of $n$ counted in the product side. Then, break all even parts in half. Now, the remaining parts are all of the form $(2k+2)m_j+1$ for some positive integers $m_j$. Replace each of these parts with $2m_j+1$; we are now considering a partition into odd parts. Now, send this to a partition into distinct parts. For this new partition $\mu_1+\mu_2+\mu_3+\mu_4+\dots$
	replace each {even}-indexed part with {$2k+1$} copies of that part.
\end{proof}

\newfamilyexample{Example} \label{id:not3mod4}

\desc{Product} Parts are $\Mod{0,1,2}{4}$.

\desc{Sum} 
An even part $2j$ is forbidden to be immediately adjacent to 
$2j-1$.

\desc{Flat form} Forbid $\forbid{1}{2}{3}{4}$.
\begin{proof}
	Let $P_j$ be the generating function of sum sides, with added restriction that
	largest part is $\leq j$.  Then:
	\begin{align*}
	P_{2j} &= \dfrac{1}{1-q^{2j}}P_{2j-2} - P_{2j-2} + P_{2j-1},\quad 
	P_{2j-1} = \dfrac{1}{1-q^{2j-1}}P_{2j-2}.
	\end{align*}
	Combining, we get:
	\begin{align*}
	P_{2j} &= P_{2j-2}\left(\dfrac{1}{1-q^{2j}}
	+\dfrac{1}{1-q^{2j-1}}-1 \right)= P_{2j-2}\left(\dfrac{1-q^{4j-1}}
	{(1-q^{2j})(1-q^{2j-1})}\right),
	\end{align*}
	Now use $P_2 = \dfrac{1-q^3}{(1-q)(1-q^2)}$ and induct. 
\end{proof}

\newfamilyexample{Example}

\desc{Product} Parts are $\nMod{0,1,2,4}{6}$.

\desc{Sum} 
\begin{enumerate}[noitemsep,topsep=0pt,label=-,leftmargin=2em]
	\item An even part $2j$ is forbidden to be immediately adjacent to 
	$2j-1$ or $2j-3$.
\end{enumerate}

\desc{Flat form} Forbid $\forbid{1}{2}{3}{4}$,  and $\forbid{2}{2}{1}{4}$.

\allowdisplaybreaks

\newfamily{} This is an infinite family, with one identity
for every even modulus greater than or $8$.
For modulus $6$, one of the conditions becomes redundant and
one gets MacMahon's identity recalled in the Introduction.

Fix a $k\geq 1$.

\desc{Product} Parts are either even or $\Mod{3}{2k+6}$.

\desc{Sum}
\begin{sumside}
	\item Difference between adjacent parts is not 1.
	\item An even part $2j$ is not immediately adjacent to
	any of 
	$2j-3,\dots,2j-2k-1$.
	\item Initial condition is implied by a fictitious zero. That is, smallest part is not $1$. 
\end{sumside}	

\desc{Conjugate} 
\begin{sumside}
	\item No part appears exactly once.
	\item If a part appears exactly $3$, $5$, $\dots$, or $2k+1$ times then there are an even number of parts  strictly greater than it.
\end{sumside}

\begin{proof}
	Consider a partition of $n$ counted in the product side. Then, break all even parts in half. Now, the remaining parts are all of the form $(2k+6)m_j+3$ for some positive integers $m_j$. Replace each of these parts with $2m_j+1$; we are now considering a partition into odd parts. Now, send this to a partition into distinct parts. For this new partition $\mu_1+\mu_2+\mu_3+\mu_4+\dots$
	replace each {odd}-indexed part with {$3$} copies of that part
	and each {even}-indexed part with {$2k+3$} copies of that part.
\end{proof}

\newfamilyexample{Example}

\desc{Product} $\Mod{0,2,3,4,6}{8}$.

\desc{Sum}
\begin{sumside}
	\item Difference between adjacent parts is not 1.
	\item An even part $2j$ is not immediately adjacent to
	$2j-3$.
	\item Initial condition is implied by a fictitious zero. That is, smallest part is not $1$. 
\end{sumside}	

\desc{Flat form} Forbid $\forbid{1}{2}{1}{2}$, and $\forbid{2}{2}{1}{4}$.

\newfamily{} This is again an infinite family, with one identity
for every even modulus $\geq 8$. Again, for modulus $6$, one of the conditions becomes redundant and we get MacMahon's identity.

Fix a $k\geq 1$.

\desc{Product} Parts are either even or $\Mod{-3}{2k+6}$.

\desc{Sum}
\begin{sumside}
	\item Difference between adjacent parts is not 1.
	\item An odd part $2j+1$ is not allowed to be immediately adjacent to
	$2j-2,\dots,2j-2k$.
	\item Initial conditions implied by a fictitious zero,
	i.e., smallest part is not equal to $1,3,\dots,2k+1$.
\end{sumside}	

\desc{Conjugate}
\begin{sumside}
	\item No part appears exactly once.
	\item If a part appears exactly $3$, $5$, $\dots$, or $2k+1$ times then there are an odd number of parts  strictly greater than it.
\end{sumside}

\begin{proof}
	Consider a partition of $n$ counted in the product side. Then, break all even parts in half. Now, the remaining parts are all of the form $(2k+6)m_j-3$ for some positive integers $m_j$. Replace each of these parts with $2m_j-1$; we are now considering a partition into odd parts. Now, send this to a partition into distinct parts. For this new partition $\mu_1+\mu_2+\mu_3+\mu_4+\dots$
	replace each {even}-indexed part with {$3$} copies of that part
	and each {odd}-indexed part with {$2k+3$} copies of that part.
\end{proof}

\newfamilyexample{Example}

\desc{Product} Parts are $\Mod{0,2,4,5,6}{8}$.

\desc{Sum}
\begin{sumside}
	\item Difference between adjacent parts is not 1.
	\item An odd part $2j+1$ is not allowed to be immediately adjacent to 
	$2j-2$.
	\item Smallest part is not equal to $1$ or $3$.
\end{sumside}	

\desc{Flat form} Forbid $\forbid{1}{2}{1}{2}$, and  $\forbid{2}{2}{3}{4}$.

\newfamily{} An infinite family with one identity for every
modulus divisible by $4$ and $\geq 12$.

Fix a $k\geq 1$.

\desc{Product} Parts are even or $\Mod{2k+5}{4k+8}$.

\desc{Sum}
\begin{sumside}
	\item Difference between consecutive parts can't be $1,3,\dots,2k+1$.
	\item An odd part $2j+1$ can't be immediately adjacent to
	$2j - 2k-2$.
	\item Initial conditions implied by a fictitious zero,
	that is, the smallest can't be either of $1,3,\dots,2k+3$.
\end{sumside}	

\desc{Conjugate}
\begin{sumside}
	\item No part appears exactly $1$, $3$, $\dots$, $2k+1$ times.
	\item If a part appears exactly $2k+3$ times then there are an odd number of parts strictly  greater than it.
\end{sumside}	

\begin{proof}
	Consider a partition of $n$ counted in the product side. Then, break all even parts in half. Now, the remaining parts are all of the form $(4k+8)m_j+(2k+5)$ for some positive integers $m_j$. Replace each of these parts with $2m_j+1$; we are now considering a partition into odd parts. Now, send this to a partition into distinct parts. For this new partition $\mu_1+\mu_2+\mu_3+\mu_4+\dots$
	replace each {odd}-indexed part with {$2k+5$} copies of that part
	and
	replace each {even}-indexed part with {$2k+3$} copies of that part.
\end{proof}

\newfamilyexample{Example}

\desc{Product} $\Mod{0,2,4,6,7,8,10}{12}$ 

\desc{Sum}
\begin{sumside}
	\item Difference between consecutive parts can't be $1$ or $3$.
	\item An odd part $2j+1$ can't be immediately adjacent to
	$2j -4$.
	\item Initial conditions implied by a fictitious zero,
	that is, the smallest part can't be either of $1,3,5$.
\end{sumside}	

\desc{Flat form} Forbid $\forbid{1}{2}{1}{2}$,
$\forbid{2}{2}{1}{2}$, and $\forbid{3}{2}{1}{4}$.

\newfamily{}\label{fam:similarlast}
An infinite family, with one identity for every
modulus divisible by $4$ that is $\geq 12$.

Fix $k\geq 1$.

\desc{Product} Each part is either even or $\Mod{2k+3}{4k+8}$.

\desc{Sum}
\begin{sumside}
	\item Difference between consecutive parts can't be $1,3,\dots,2k+1$.
	\item An even part $2j$ can't be immediately adjacent to
	$2j - 2k -3$.
	\item Initial conditions are given by a fictitious zero,
	that is, the smallest part is not amongst $1,3,\dots, 2k+1$.
\end{sumside}	

\desc{Conjugate}
\begin{sumside}
	\item No part appears exactly $1$, $3$, $\dots$, $2k+1$ times.
	\item If a part appears exactly $2k+3$ times then there are an even number of parts strictly  greater than it.
\end{sumside}	

\begin{proof}
	Consider a partition of $n$ counted in the product side. Then, break all even parts in half. Now, the remaining parts are all of the form $(4k+8)m_j+(2k+3)$ for some positive integers $m_j$. Replace each of these parts with $2m_j+1$; we are now considering a partition into odd parts. Now, send this to a partition into distinct parts. For this new partition $\mu_1+\mu_2+\mu_3+\mu_4+\dots$
	replace each {even}-indexed part with {$2k+5$} copies of that part
	and
	replace each {odd}-indexed part with {$2k+3$} copies of that part.
\end{proof}

\newfamilyexample{Example}

\desc{Product} Each part is either even or $\Mod{5}{12}$.

\desc{Sum}
\begin{sumside}
	\item Difference between consecutive parts can't be $1$ or $3$.
	\item An even part $2j$ can't be immediately adjacent to
	$2j - 5$.
	\item Smallest part is not $1$ or $3$.
\end{sumside}	

\desc{Flat form} Forbid $\forbid{1}{2}{1}{2}, \forbid{2}{2}{1}{2}$, and  $\forbid{3}{2}{3}{4}$.

\begin{rmk}
	Families \refid{fam:similarfirst}--\refid{fam:similarlast}\,\,are of a very similar nature. It seems very likely that they can all be incorporated into a grand family and proved together. We leave this for an interested reader.
\end{rmk}

\newfamily \label{fam:arxiv} Let $k\geq 2$. \footnote{This family is incorporated from the article \href{https://arxiv.org/abs/1703.04715}{\texttt{arXiv:\ 1703.04715 [math.CO]}}.}

\desc{Product} Each part is either even but $\nMod{2}{4k}$ or odd and $\Mod{1,2k+1}{4k}$

\desc{Sum}
\begin{sumside}
	\item If an odd part $2j+1$ is present, then none of the other parts are equal to any of $2j+1,2j+2,\dots,2j+2k-1$.
\end{sumside}

\medskip

Actually, this family is in a sense dual to the following family of identities due to Andrews, some special cases of which were found by our computer program:

\begin{thm}[Thm.\ 3 \cite{And-SecondSchur}]\label{thm:And1}
	Let $k\geq 2$. 
	\desc{Product} Each part is either even but $\nMod{4k-2}{4k}$ or odd and $\Mod{2k-1,4k-1}{4k}$.
	
	\desc{Sum}
	\begin{sumside}
		\item If an odd part $2j+1$ is present, then none of the other parts are equal to any of $2j+1,2j,\dots,2j-2k+3$.
		\item Smallest part is not equal to any of $1,3,\dots,2k-3$.
	\end{sumside}	
\end{thm}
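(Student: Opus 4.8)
Since this is Theorem~3 of \cite{And-SecondSchur}, the honest course is simply to quote Andrews; the statement is recorded here only because several of its cases turned up in our search and because it is the natural ``dual'' of Family~\refid{fam:arxiv}. For a self-contained argument in keeping with the rest of the paper, the plan is to reprove it by the recursive/generating-function route --- in essence Andrews' own argument, whose limiting step is organized through Appell's theorem.

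First I would put the product side in workable shape. The odd residues $2k-1,4k-1\pmod{4k}$ are exactly the (automatically odd) integers that are $\Mod{-1}{2k}$, while the admissible even parts are precisely the doubles of integers that are $\nMod{-1}{2k}$, so the product-side generating function is
\[
\prod_{j\ge1}\frac{1}{1-q^{2jk-1}}\ \cdot\ \prod_{\substack{a\ge1\\ a\,\nMod{-1}{2k}}}\frac{1}{1-q^{2a}}\,.
\]
Next I would introduce $P_j$, the generating function for sum-side partitions whose largest part is at most $j$, and peel off the largest part, branching on its parity. An even top part $2a$ imposes no restriction of its own and may repeat, which gives $P_{2a}=\dfrac{1}{1-q^{2a}}\,P_{2a-1}$. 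An odd top part $2j+1$ must occur exactly once and kills every one of $2j-2k+3,\dots,2j$, which gives $P_{2j+1}=P_{2j}+q^{2j+1}P_{2j-2k+2}$ for $2j+1\ge 2k-1$; for a small odd index $1,3,\dots,2k-3$ the initial condition ``smallest part $\notin\{1,3,\dots,2k-3\}$'' --- which is just the ``fictitious zero'' normalization of Section~\ref{sec:Searchspace} --- forbids that part entirely, so $P_{2j+1}=P_{2j}$ there, and the base $P_{2k-2}=\prod_{i=1}^{k-1}(1-q^{2i})^{-1}$ is explicit. Solving this recursion and letting $j\to\infty$ yields the displayed product.

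A purely bijective proof in the style of Families~\refid{fam:similarfirst}--\refid{fam:similarlast} --- halve the even parts, send each odd product-side part $2jk-1$ to $2j-1$, apply Sylvester's (or Pak--Postnikov's) bijection, re-inflate the resulting distinct parts by index --- seems less likely to go through unchanged, because the sum side is lopsided: its odd parts behave like \emph{distinct} parts carrying a Schur-type gap while its even parts are essentially free, and even the admissible even parts form a restricted residue class. One would probably have to iterate Stockhofe's bijection over several residue classes, as in Family~\refid{fam:originalfamily10}, or invent a new map, which is why I would lean on the recursive proof.

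The hard part is solving the recursion. The relation $P_{2j+1}-P_{2j}=q^{2j+1}P_{2j-2k+2}$ carries an order-$(2k-1)$ ``jump'', so it does not simply telescope, and one must unwind it uniformly in $k$ back to a closed infinite product --- precisely the computation Andrews carried out, with Appell's theorem smoothing the passage $j\to\infty$ --- which is why, in practice, one invokes his result rather than redoing it.
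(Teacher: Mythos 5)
Your proposal is sound as far as it goes: citing Andrews is legitimate for a theorem stated as his, and the recursion you set up ($P_{2a}=\frac{1}{1-q^{2a}}P_{2a-1}$, $P_{2j+1}=P_{2j}+q^{2j+1}P_{2j-2k+2}$, with the small odd parts removed by the fictitious-zero initial condition and base $P_{2k-2}=\prod_{i=1}^{k-1}(1-q^{2i})^{-1}$) is correct. But the paper takes a genuinely different route, and it disposes of exactly the step you declared hard and deferred to Andrews. The paper neither redoes Andrews' unwinding nor merely cites him: it proves the overpartition theorem (Theorem \ref{thm:mainoverthm}), tracking the number of overlined parts with an extra variable $a$, and then recovers Theorem \ref{thm:And1} by the specialization $(a,q)\mapsto(q^{2k-3},q^2)$ (non-overlined $j\mapsto 2j$, overlined $\overline{j}\mapsto 2j+2k-3$), the dual Family \refid{fam:arxiv}\ coming from $(a,q)\mapsto(q^{-1},q^2)$. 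Your recursion is precisely this specialization of the paper's $R_j(a,q)=\frac{1}{1-q^j}R_{j-1}(a,q)+\frac{aq^{j-k+1}}{1-q^j}R_{j-k}(a,q)$: combining your two parity steps gives $P_{2j}=\frac{1}{1-q^{2j}}\left(P_{2j-2}+q^{2j-1}P_{2j-2k}\right)$, which is $R_j(q^{2k-3},q^2)$. The ``order-$(2k-1)$ jump'' that you say prevents telescoping is resolved by keeping $a$ generic and introducing $F(a,x,q)=\sum_{j\geq 0}R_j(a,q)x^j$, which satisfies $(1-x)F(a,x,q)=(1+ax^kq)F(a,xq,q)$; iterating yields the infinite product $\prod_{j\geq 0}\frac{1+ax^kq^{jk+1}}{1-xq^j}$, and Appell's comparison theorem (the limiting device you anticipated but did not carry out) gives $R_\infty(a,q)=\left(-aq;q^k\right)_\infty/(q;q)_\infty$. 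So the paper's approach buys a single self-contained proof that simultaneously covers this theorem, its dual family, and the further corollaries, whereas your route either stops at a citation or requires reproducing Andrews' computation by hand. Your instinct that the Sylvester/Stockhofe bijective machinery does not transfer directly here agrees with the paper, which makes no bijective claim for this family.
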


Over the past decade or so, there has been a lot of interest in exploring overpartition analogues of classical partition identities, 
(as a small and by no means exhaustive sample, see papers by Chen et al., Corteel, Lovejoy, and Dousse \cite{CoLo-over,Lo-RRover,CSS-brover,Dousse-Schurover}).
Overpartitions are partitions in which last occurrence of any part may appear overlined. 

The fact that odd parts are not allowed to be repeated (though even parts may be repeated arbitrarily many times) in the identities above suggests that both are actually special cases of an overpartition theorem. We now present an overpartition generalization that can be used to recover Family \refid{fam:arxiv}\,\, and Theorem \ref{thm:And1}  upon appropriate specializations.

\begin{thm}\label{thm:mainoverthm}
	For $k \ge 2$,  let $A_k(m,n)$ be the number of overpartitions of $n$ with exactly $m$ overlined parts, subject to the following conditions:
	\begin{itemize}
		\item If an overlined part $\overline{b}$ appears then 
		all of the non-overlined parts $b, b+1,\dots, b+k-2$ are forbidden to appear.
		\item If an overlined part $\overline{b}$ appears then 
		all of the overlined parts $\overline{b+1}$, $\overline{b+2}$, $\dots$, $\overline{b+k-1}$ are forbidden to appear.
	\end{itemize}
	Then, 
	\[\sum_{m,n \ge 0} A_k(m,n)a^m q^n = \dfrac{\left(-aq;q^k\right)_{\infty}}{(q;q)_{\infty}}.\]
\end{thm}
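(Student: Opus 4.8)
The plan is to establish the product formula by setting up a recurrence in the largest allowed part and solving it, which is the cleanest route given that the forbidden-pattern conditions are purely local (an overlined part $\overline b$ interacts only with parts and overlined parts in a window of length $k-1$ or $k-2$ above it). First I would define, for each integer $j\ge 0$, the two-variable generating function $F_j(a,q)=\sum A_k^{(\le j)}(m,n)\,a^m q^n$ where the superscript restricts to overpartitions whose \emph{smallest} part is $\ge$ something, or dually whose largest part is $\le j$; tracking from the top down is slightly more convenient because the constraints imposed by an overlined part $\overline b$ only involve parts $\ge b$. Then I would peel off the largest part. There are three cases: either $j$ does not appear at all (contributing $F_{j-1}$); or $j$ appears non-overlined with multiplicity $\ge 1$ (contributing $\frac{q^j}{1-q^j}$ times a suitably restricted $F$); or $j$ appears overlined, possibly together with extra non-overlined copies, which forces the parts $j-1,\dots,j-k+1$ (for the overlined exclusion) — wait, the exclusions go \emph{upward} from $\overline b$, so since $j$ is the largest part there is nothing above to exclude, and the overlined copy of $j$ simply contributes a factor $aq^j(1+\frac{q^j}{1-q^j})=\frac{aq^j}{1-q^j}$ while the \emph{presence} of $\overline j$ will constrain smaller parts in later steps. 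This asymmetry (constraints propagate downward in part-size, i.e. toward parts examined later) is what will make the recursion telescope.

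The key step is to recognize the correct bookkeeping: when I remove the top part, I must record how far down the ``shadow'' of any overlined part reaches. Concretely, I would instead build the partition from the \emph{smallest} part upward, or equivalently read the excerpt's conditions as: an overlined $\overline b$ forbids non-overlined $b,\dots,b+k-2$ and overlined $\overline{b+1},\dots,\overline{b+k-1}$, all \emph{larger}. So processing parts in decreasing order, once I have chosen the multiset of parts $>j$, whether I may place an overlined $\overline j$ depends only on whether $\overline{j+1},\dots,\overline{j+k-1}$ are absent, and whether I may place non-overlined $j$ depends on whether $\overline j,\dots,\overline{j-k+2}$ — no, again those are the ones \emph{below}. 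I would set this up carefully with a state variable $s\in\{0,1,\dots,k-1\}$ = ``number of consecutive part-values just below the current one that are still blocked by an overlined part already placed above,'' and write a linear system $F_j = (\text{stuff})\,F_{j-1}$ with the state threaded through. Summing/iterating, the non-overlined parts contribute the full $\prod_{j\ge 1}(1-q^j)^{-1}=1/(q;q)_\infty$, while the overlined parts, being forced to be $k$-apart (because $\overline b$ blocks $\overline{b+1},\dots,\overline{b+k-1}$) and each contributing a factor $a$, contribute exactly $(-aq;q^k)_\infty=\prod_{j\ge 0}(1+aq^{1+jk})$ — the $\overline b$ exclusion of the $k-1$ non-overlined parts above it is precisely compensated in the telescoping so as not to disturb the $1/(q;q)_\infty$ factor. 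That miraculous cancellation is the heart of the identity and I would verify it by checking that the recursion for $F_j/F_{j-1}$ collapses to $\frac{1}{1-q^j}$ times the appropriate overlined contribution.

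Alternatively — and this may be the cleaner writeup — I would give a bijective/combinatorial argument in the spirit of the rest of the paper: show directly that the overpartitions counted by $A_k(m,n)$ are in weight- and overline-preserving bijection with pairs consisting of (i) an ordinary partition of some $n-N$ (accounting for $1/(q;q)_\infty$) and (ii) a partition into $m$ distinct parts each $\Mod{1}{k}$ summing to $N$ (accounting for $(-aq;q^k)_\infty$). The map would take the overlined parts $\overline{b_1}>\overline{b_2}>\cdots>\overline{b_m}$, which are automatically $\ge k$ apart, and ``compress'' the forbidden windows: subtract suitable multiples of $k-1$ to record them as distinct parts $\equiv 1\pmod k$, then re-insert the liberated part-values into the non-overlined partition, exactly as in the proofs of the earlier families where parts are broken up and reassembled. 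The main obstacle I anticipate is handling the \emph{boundary interactions} — when two overlined parts are close to the minimum allowed distance $k$ apart, their forbidden windows nearly abut, and one must check that the compression map remains injective and that the non-overlined part being reinserted does not itself land in a still-forbidden slot; this is the place where the hypothesis $k\ge2$ and the precise window lengths ($k-2$ for non-overlined, $k-1$ for overlined) are used, and getting the edge cases right will require the most care. Once that is done, taking $a\to$ appropriate values and specializing even/odd congruence classes recovers Family \refid{fam:arxiv} and Theorem \ref{thm:And1} as claimed.
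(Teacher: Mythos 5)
Your overall strategy --- a recursion in the largest allowed part --- is the same as the paper's starting point, but the decisive steps are missing, and the one concrete mechanism you name would fail. The truncated generating functions $P_j(a,q)$ (all parts $\le j$) are not products, and the ratio $P_j/P_{j-1}$ does not ``collapse to $\tfrac{1}{1-q^j}$ times the appropriate overlined contribution'': already for $k=2$ one has $P_1=\tfrac{1}{1-q}+aq$, which is not $\tfrac{1+aq}{1-q}$, so the telescoping you propose to ``verify'' is simply false, and the ``miraculous cancellation'' you invoke is a restatement of the theorem rather than an argument. (Your state bookkeeping is also oriented the wrong way: an overlined $\overline{b}$ constrains values \emph{above} it, so nothing already placed in decreasing order is ``blocked by an overlined part already placed above''; rather, when you place $\overline{b}$ you must look back at the window of $k-1$ values above it.) The paper circumvents all of this by isolating the right auxiliary quantity: $R_j$, counting the restricted overpartitions with parts $\le j$ in which moreover $\overline{j},\overline{j-1},\dots,\overline{j-k+2}$ are banned. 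This single extra restriction produces the clean two-term recursion $R_j=\tfrac{1}{1-q^j}R_{j-1}+\tfrac{aq^{j-k+1}}{1-q^j}R_{j-k}$; passing to $F(a,x,q)=\sum_{j\ge 0}R_j x^j$ turns it into the functional equation $(1-x)F(a,x,q)=(1+ax^kq)F(a,xq,q)$, which iterates to an explicit product, and the limit is then extracted rigorously via Appell's comparison theorem, $R_\infty=\lim_{x\to 1}(1-x)F(a,x,q)=\left(-aq;q^k\right)_\infty/(q;q)_\infty$. Your proposal contains neither the restricted object $R_j$ (hence no closed recursion), nor the auxiliary variable $x$, nor any device for the passage $j\to\infty$ --- ``summing/iterating'' does not by itself yield the product.

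Your alternative bijective route is plausible in outline (the overlined parts are indeed forced to be at least $k$ apart and should encode distinct parts $\equiv 1 \pmod k$), but you yourself flag its crux --- reversibility of the compression/reinsertion when forbidden windows nearly abut --- as unresolved; that difficulty is the entire content of such a proof, not an edge case to be checked later, and the paper does not take this route for this theorem (its bijections are used for the earlier families). As written, neither branch of the proposal establishes the identity.
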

\begin{proof}
	Fix $k \ge 2$. Let $p_j(m, n)$ be the number of overpartitions of $n$ 
	with $m$ overlined parts
	that satisfy the conditions in Theorem \ref{thm:mainoverthm}, with the further restriction that all parts are $\leq j$. Let $r_j(m,n)$ be the number of overpartitions of $n$ counted by $p_j(m,n)$ where $\overline{j}$, $\overline{j-1}$, \dots, $\overline{j-k+2}$ do not appear (that is, the largest possible overlined part is $\overline{j-k+1}$). Then, let 
	\begin{align*}
	P_j(a,q) &= \sum_{m,n \ge 0} p_j(m,n)a^mq^n,\\
	R_j(a,q) &= \sum_{m,n \ge 0} r_j(m,n)a^mq^n,
	\end{align*}
	we let $P_0=R_0=1$.
	It is clear that
	\[R_{\infty}(a,q) = P_{\infty}(a,q) = \sum_{m,n \ge 0} D_k(m,n)a^m q^n.\]
	
	Let 
	\[F(a,x,q) = \sum_{j\geq 0}R_j(a,q)x^j.\]
	Observe that the following recursion and initial conditions are satisfied:
	\begin{align*}
	R_j(a,q) &= \dfrac{1}{1-q^j}R_{j-1}(a,q) + \dfrac{aq^{j-k+1}}{1-q^j} R_{j-k}(a,q),\,\, j\geq k.\\
	R_j(a,q) &= \dfrac{1}{(q;q)_j},\,\, 0\leq j < k.
	\end{align*}
	Note the following alternate way to write the recursion and the initial conditions:
	\begin{align*}
	R_j(a,q) &= \dfrac{1}{1-q^j}R_{j-1}(a,q) + \dfrac{aq^{j-k+1}}{1-q^j} R_{j-k}(a,q),\,\, j\geq 1.\\
	R_0(a,q) &= 1,\quad R_{j}(a,q)=0\,\,\text{for } -k<j<0,
	\end{align*}
	which immediately gets us to
	\[
	(1-x)F(a,x,q)=F(a,xq,q)+ax^kqF(a,xq,q) = (1+ax^kq)F(a,xq,q).
	\]
	Noting that 
	\[
	\lim\limits_{n\rightarrow\infty}F(a,xq^n,q)=R_0(a,q)=1,
	\]
	we obtain
	\[F(a,x,q)= \prod\limits_{j\geq 0}\dfrac{1+ax^kq^{jk+1}}{1-xq^j}.\]
	Finally, by Appell's comparison theorem \cite[page 101]{D-taylor} we have: 
	\[
	R_{\infty}(a,q)=
	\lim\limits_{x\rightarrow 1}((1-x)F(a,x,q))
	= \lim\limits_{x\rightarrow 1}
	\left(\prod\limits_{j\geq 0}\dfrac{1+ax^kq^{jk+1}}{1-xq^{j+1}}\right)
	=
	\dfrac{\left(-aq;q^k\right)_{\infty}}{(q;q)_{\infty}}.
	\]
\end{proof}

Now, Family \refid{fam:arxiv}\,\,and Theorem \ref{thm:And1}  can be recovered by appropriate specializations. Letting $(a,q)\mapsto \left(q^{-1},q^2\right)$ (that is, we map every nonoverlined part $j\mapsto 2j$ and every overlined part $\overline{j}\mapsto 2j-1$) gets us Family \refid{fam:arxiv}, while using $(a,q)\mapsto \left(q^{2k-3},q^2\right)$ (now mapping $j\mapsto 2j$ and every overlined part $\overline{j}\mapsto 2j+2k-3$) provides us with Theorem \ref{thm:And1}.

However, many more corollaries can be found. For $k\ge 2$, by choosing $i \in \left\{0,\dots,k-1\right\}$ and letting $(a,q)\mapsto \left(q^{2i-1},q^2\right)$, we get:
\begin{cor}
	Let $B(n)$ be the number of partitions of a non-negative integer $n$ in which each part is either even but $\nMod{4i+2}{4k}$ or odd and $\Mod{2i+1,2k+2i+1}{4k}$. Also, let $C(n)$ be the number of partitions of $n$ in which if an odd part $2j+1$ is present, 
	then none of the other even parts are equal to any of 
	$2j-2i+2, 2j-2i+4, \dots, 2j+2k-2i-2$,
	none of the other odd parts are equal to any of
	$2j+1, 2j+3, \dots, 2j+2k-1$
	and the smallest odd part is at least $2i+1$. 
	Then, $B(n) = C(n)$ for all $n$.
\end{cor}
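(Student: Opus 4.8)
The plan is to obtain this corollary as a direct specialization of Theorem~\ref{thm:mainoverthm}, translating the identity
\[
\sum_{m,n\ge 0}A_k(m,n)a^mq^n=\frac{(-aq;q^k)_\infty}{(q;q)_\infty}
\]
under $(a,q)\mapsto(q^{2i-1},q^2)$. Combinatorially this substitution is the relabelling already indicated in the text: send every non-overlined part $j$ to the even part $2j$ and every overlined part $\overline{j}$ to the odd part $2j+2i-1$. First I would record that the substitution is legitimate as a formal power series even in the edge case $i=0$, where $a=q^{-1}$: in an overpartition counted by $A_k(m,n)$ the $m$ overlined parts are distinct positive integers, so $n\ge 1+2+\dots+m\ge m$, and the resulting exponent $2n+(2i-1)m$ of $q$ is thus $\ge m\ge 0$.

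For the product side, after substitution the right-hand side becomes $(-q^{2i+1};q^{2k})_\infty/(q^2;q^2)_\infty$. I would apply the classical factorization $(-z;Q)_\infty=(z^2;Q^2)_\infty/(z;Q)_\infty$ with $z=q^{2i+1}$ and $Q=q^{2k}$ to rewrite this as
\[
\frac{(q^{4i+2};q^{4k})_\infty}{(q^{2i+1};q^{2k})_\infty\,(q^2;q^2)_\infty}.
\]
Because $0\le i\le k-1$ forces $0<2i+1<2k$ and $0<4i+2<4k$, the factor $1/(q^{2i+1};q^{2k})_\infty$ is the generating function for partitions into (repeatable) odd parts whose residue modulo $4k$ is $2i+1$ or $2k+2i+1$, while $(q^{4i+2};q^{4k})_\infty$ cancels from $1/(q^2;q^2)_\infty$ exactly the even parts $\equiv 4i+2\pmod{4k}$ and nothing else. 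Since these two sets of allowed parts are disjoint --- one entirely odd, one entirely even --- the displayed product is precisely $\sum_n B(n)q^n$.

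For the sum side, I would carry out the index bookkeeping. Writing an odd part as $2j+1$, it is the image of the overlined part $\overline{b}$ with $b=j+1-i$, so the two local conditions of Theorem~\ref{thm:mainoverthm} translate as follows: the forbidden non-overlined parts $b,b+1,\dots,b+k-2$ become the forbidden even parts $2j-2i+2,2j-2i+4,\dots,2j+2k-2i-2$; the forbidden overlined parts $\overline{b+1},\dots,\overline{b+k-1}$ become the forbidden odd parts $2j+3,2j+5,\dots,2j+2k-1$, to which one may freely adjoin $2j+1$ itself since overlined parts occur at most once (so odd parts cannot repeat); and the constraint $b\ge 1$ becomes the requirement that the smallest odd part be at least $2i+1$, which is exactly what makes the inverse relabelling $2j+1\mapsto\overline{j+1-i}$ well defined. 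Hence the overpartitions enumerated by the specialization of $A_k$ correspond bijectively to the partitions enumerated by $C(n)$, and combining this with the product-side computation yields $B(n)=C(n)$ for all $n$; as a sanity check, $i=0$ returns the family obtained above from $(a,q)\mapsto(q^{-1},q^2)$.

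The argument is essentially routine given Theorem~\ref{thm:mainoverthm}. The one step that genuinely needs care is the product-side manipulation --- verifying that $(q^{4i+2};q^{4k})_\infty$ removes exactly one residue class of even parts, and checking the edge case $i=0$ throughout (the negative $a$-exponent, and the residue $4i+2=2$) --- together with not slipping on the several $\pm 2i$ and $\pm 1$ shifts in the sum-side translation.
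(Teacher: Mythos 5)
Your proposal is correct and follows exactly the paper's route: the corollary is obtained by specializing Theorem \ref{thm:mainoverthm} via $(a,q)\mapsto(q^{2i-1},q^2)$, which the paper states in one line without detail. Your bookkeeping on the product side (the factorization $(-q^{2i+1};q^{2k})_\infty=(q^{4i+2};q^{4k})_\infty/(q^{2i+1};q^{2k})_\infty$) and on the sum-side relabelling $\overline{b}\mapsto 2b+2i-1$, including the $i=0$ edge case, checks out.
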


We leave it to the reader to work out identities related to the specializations
$q\mapsto q^t$ for $t>2$.

\newfamily{} 
\label{fam:easy}
This is perhaps the easiest of the families that we came across, but it is interesting nonetheless.
The proofs of all of the identities in this family follow the same pattern as in Family \refid{id:family1proofexample}{} 
below.

For every modulus, we have $k$ identities that avoid
exactly one congruence class in their product.
This can be greatly generalized. We first start with the ``base'' case.

Fix $k\geq 4$ and let $1\leq j \leq k$.

\desc{Product} Parts are $\nMod{j}{k}$.

\desc{Sum}
\begin{sumside}
	\item If difference at distance $\left\lceil k/2 \right\rceil-1$ is strictly less than $2$, 
	then the sum of these $\left\lceil k/2 \right\rceil$ parts is $\nMod{j}{k}$.
	\item Initial conditions are implied by adding $\left\lceil k/2\right\rceil -1$ fictitious zeros. 
\end{sumside}	

\desc{Flat form} Forbid $\forbid{1}{\left\lceil k/2\right\rceil}{j}{k}$.

\newfamilyexample{Family} Here we present the full set of identities for $k=5$.

\begin{enumerate}[topsep=0pt]    
	\item
	\desc{Product} Parts are $\nMod{1}{5}$.
	
	\desc{Sum}
	\begin{sumside}
		\item If difference at distance 2 is $0$ or $1$ then the sum of these three parts is $\nMod{1}{5}$
		\item Smallest part is at least 2.
	\end{sumside}	
	
	\desc{Flat form} Forbid $\forbid{1}{3}{1}{5}$.

	\item
	\desc{Product}  Parts are $\nMod{2}{5}$.
	
	\desc{Sum}
	\begin{sumside}
		\item If difference at distance 2 is $0$ or $1$ then the sum of these three parts is $\nMod{2}{5}$
		\item $1$ appears at most once.
	\end{sumside}	
	
	\desc{Flat form} Forbid $\forbid{1}{3}{2}{5}$.
	
	\item
	\desc{Product} Parts are  $\nMod{3}{5}$.
	
	\desc{Sum}
	\begin{sumside}
		\item If difference at distance 2 is $0$ or $1$ then the sum of these three parts is $\nMod{3}{5}$
	\end{sumside}	
	
	\desc{Flat form} Forbid $\forbid{1}{3}{3}{5}$.
	
	\item
	\desc{Product} Parts are  $\nMod{4}{5}$.
	
	\desc{Sum}
	\begin{sumside}
		\item If difference at distance 2 is $0$ or $1$ then the sum of these three parts is $\nMod{4}{5}$
	\end{sumside}	
	
	\desc{Flat form} Forbid $\forbid{1}{3}{4}{5}$.
	
	\item
	\desc{Product} Parts are  $\nMod{5}{5}$.
	
	\desc{Sum}
	\begin{sumside}
		\item If difference at distance 2 is $0$ or $1$ then the sum of these three parts is $\nMod{5}{5}$
	\end{sumside}	
	
	\desc{Flat form} Forbid $\forbid{1}{3}{5}{5}$.
	
\end{enumerate}    

\subsection*{Avoiding more number of congruence classes}

This can be generalized to avoiding two or more congruence classes.
The main idea for that is as follows. 
Let $N$ be the intended modulus, and let $S$ be a set of congruence classes we wish to avoid on the product.
Suppose that we are looking for an identity with the following form:

\desc{Product} Parts $\nMod{S}{N}$

\desc{Sum} 
\begin{sumside}
	\item If difference at distance $2$ is strictly less than 2, i.e. ($\lambda_i -\lambda_{i+2}\leq 1$) 
	then the sum of these parts is $\nMod{S}{N}$, i.e., $\lambda_i + \lambda_{i+1} +\lambda_{i+2} \nMod{S}{N}$.
	\item Possibly add fictitious zeros as appropriate.
\end{sumside}

Then, it appears to us that this can always be done as long as elements of $S$ are \emph{sufficiently spread out}.
Below, we show how to do this for a few moduli $N$ and a corresponding sets $S$.

\subsubsection*{Avoiding 2 congruence classes}
We wish to let $S = \{ i,j \}$ be a pair of integers which will be forbidden residues
and let $N$ be a modulus. 

We sketch a proof of some mod$-9$s; the proofs of others are similar.

\newfamilyexample{Family} \label{id:family1proofexample}
A family of Mod$-9$s, $N=9$.

The set $S$ can be taken to be one of 
\[
\{0,3\}, \{0,4\},\{0,5\},\{0,6\}, \quad
\{1,6\}, \quad
\{2,6\},\quad
\{3,6\}, \{3,7\},\{3,8\}.
\]
with no fictitious zeros added.

\begin{proof} We show how this works for a few pairs.
	The pairs $\{0,3\},\{0,6\},\{3,6\}$ yield very easy identities. The rest are very mildly challenging.
	The proofs are similar to Identity \refid{id:not3mod4}.
	
	For $\{0,4\}$, observe that:
	\begin{align*}
	&P_{3n+3}\\&  = \left(1+q^{3n+3} + q^{3n+3}q^{3n+3}\right)\left(\dfrac{q^{3n+2}}{1-q^{3n+2}}(1+q^{3n+1}) + \dfrac{1}{1-q^{3n+1}} \right)P_{3n}\\
	&= \dfrac{(1 - q^{9n+9})(1-q^{9n+4})}{(1-q^{3n+3})(1-q^{3n+2})(1-q^{3n+1})} P_{3n}.
	\end{align*}
	Substituting $P_0=1$ and letting $n\rightarrow\infty$ we get the result.
	
	For $\{0,5\}$, the recursion changes to:
	\begin{align*}
	& P_{3n+3} \\ & = \left(1+q^{3n+3} + q^{3n+3}q^{3n+3}\right)\left(\dfrac{q^{3n+2}q^{3n+2}}{1-q^{3n+2}}  + \dfrac{1}{1-q^{3n+1}}(1+q^{3n+2}) \right)P_{3n}.
	\end{align*}
	
	For $\{3,7\}$, the recursion changes to:
	\begin{align*}
	& P_{3n+4} \\ & = \left(1+q^{3n+4} + q^{3n+4}q^{3n+4}\right)\left(\dfrac{q^{3n+3}}{1-q^{3n+3}}(1+q^{3n+2}) + \dfrac{1}{1-q^{3n+2}} \right)P_{3n+1}.
	\end{align*}
	
	For $\{3,8\}$, the recursion changes to:
	\begin{align*}
	& P_{3n+4} \\ & = \left(1+q^{3n+4} + q^{3n+4}q^{3n+4}\right)\left(\dfrac{q^{3n+3}q^{3n+3}}{1-q^{3n+3}}  + \dfrac{1}{1-q^{3n+2}}(1+q^{3n+3}) \right)P_{3n+1}.
	\end{align*}
	For $\{3,7\}$ and $\{3,8\}$, we let $P_1 = (1-q^3)/(1-q)$.
	Note the similarity of recursions of $\{0,4\}$ with $\{3,7\}$ and $\{0,5\}$ with $\{3,8\}$.
\end{proof}
\newfamilyexample{Family} A family of Mod$-10$s, $N=10$.
The set $S$ takes the values:
\[
\{0,5\}, \quad
\{3,8\}, \{3,9\}, \quad \{4,9\}, \quad \{5,9\} 
\]
with no fictitious zeros added.

Note that $\{0,5\},\{3,8\},\{4,9\}$ are from
an already discovered family of Mod$-5$s.

\newfamilyexample{Family} A family of Mod$-11$s, $N=11$. The set $S$ takes values:
\begin{align*}
\text{With no fictitious zeros:}\quad
& \{0,4\},\{0,5\}, \{0,6\}, \,\,
\{3,7\}, \{3,8\}, \{3,9\}, \\ 
&  \{4,9\}, 
\{4,10\}, \,\,
\{5,10\}, \,\,
\{6,10\}.\\
\text{With 1 fictitious zero:}\quad
& \{2,7\}, \{2,8\}, \{2,9\}.\\
\text{With 2 fictitious zeros:}.\quad
& \{1,6\}, \{1,7\}.
\end{align*}

\subsubsection*{Avoiding 3 congruence classes}
We exhibit this with an example.
\newfamilyexample{Identity}
We continue working with difference at distance $2$ and no fictitious zeroes. 
The set $S=\{0,9,16\}$ of three elements modulo $23$ works.

One may find other pairs $N,S$.

\subsection*{Beyond difference-at-distance 2}
This idea naturally generalizes to conditions with higher distances, as we show with an example.

\newfamilyexample{Identity} 

Let $S=\{0,7\}$, $N=17$.

\desc{Product} Parts are $\nMod{S}{N}$

\desc{Sum}
\begin{sumside}
	\item If difference at distance 3 is strictly less than 2, that is $\lambda_i - \lambda_{i+3}\leq 1$,
	then the sum of these parts, that is, $\lambda_i+\lambda_{i+1}+\lambda_{i+2}+\lambda_{i+3}\nMod{S}{N}$.
\end{sumside}

And so on for other pairs $N, S$ and with conditions at larger distances\dots.

\begin{rmk}
	We leave to the interested reader to work out a precise theorem that covers all of these examples.
\end{rmk}

\bibliographystyle{abbrv}

\onehalfspacing

\end{document}